\documentclass[11pt,reqno]{amsart}
\usepackage[margin=1in]{geometry}

\usepackage{amssymb,amsmath,amsthm}
\usepackage{microtype}
\usepackage{graphicx}
\usepackage{subcaption}
\usepackage{fullpage}
\usepackage{scrextend}
\usepackage{siunitx}
\usepackage{mathtools}
\usepackage{bbm}
\usepackage{algorithm}
\usepackage{algpseudocode}
\usepackage{comment}

\usepackage{enumerate}
\usepackage{tikz,calc}
\usepackage{tikz-cd}
\usetikzlibrary{automata,positioning}
\usepackage{comment}
\usetikzlibrary{calc}
\usepackage{epsfig,graphicx}
\usepackage{listings}
\usepackage{enumitem}

\usepackage{float}
\usepackage{bbm}
\usepackage[colorlinks=true, pdfstartview=FitH, linkcolor=blue, citecolor=blue, urlcolor=blue]{hyperref}

\newcommand{\eps}{\varepsilon}
\newcommand{\defeq}{\vcentcolon=}

\newcommand{\Prob}{\mathbb P}
\newcommand{\E}{\mathbb E}
\newcommand{\Var}{\mathrm{Var}}
\newcommand{\norm}[1]{\left\|#1\right\|_\infty}

\renewcommand{\leq}{\leqslant}
\renewcommand{\le}{\leqslant}
\renewcommand{\geq}{\geqslant}
\renewcommand{\ge}{\geqslant}
\newcommand{\fe}{f_{\text{ext}}}

\newcommand{\N}{\mathbb{N}}
 
\newcommand{\Z}{\mathbb{Z}}

\newcommand{\Mod}[1]{\ (\text{mod}\ #1)}
\renewcommand{\Mod}[1]{{\ifmmode\text{\rm\ (mod~$#1$)}\else\discretionary{}{}{\hbox{ }}\rm(mod~$#1$)\fi}}

\newcommand{\allnotes}[1]{}
\renewcommand{\allnotes}[1]{\textit{#1}}

\newtheorem{theorem}{Theorem}[section]

\newtheorem{lemma}[theorem]{Lemma}

\theoremstyle{definition} 

\numberwithin{theorem}{section}

\allowdisplaybreaks

\begin{document}

\title{A note on the extensible no-three-in-line problem}

\author{Anubhab Ghosal}

\address{Mathematical Institute \\ University of Oxford \\ Oxford, UK OX2 6GG}
\email{ghosal@maths.ox.ac.uk}

\subjclass[2020]{05D40, 52C35}
\keywords{no-three-in-line}

\begin{abstract}
    We show the existence of a set $S\subset\mathbb{Z}^2$ avoiding collinear triples satisfying $|S\cap [n]^2|=\Omega(n/\sqrt{\log n})$ for sufficiently large $n$. This improves on the best-known lower bound on Erde's extensible no-three-in-line problem due to Nagy, Nagy and Woodroofe by $\sqrt{\log n}$, leaving the same gap to the trivial upper bound. Our construction is random.
\end{abstract}

\maketitle

\section{Introduction}

More than a century ago, Dudeney~\cite{Dud} posed the following problem -- What is the largest number of points $f(n)$ that you can place on the $n$ by $n$ grid so that no three lie on a line? The \emph{no-three-in-line problem} is now one of the most famous extremal questions in discrete geometry and naturally appears on several lists of open problems~\cite{Guy,BMP,Green}. An upper bound of $2n$ follows immediately by noticing that there can be at most $2$ points on each of the $n$ vertical lines of the grid. Interestingly, this is the best-known upper bound on $f(n)$ for any $n$. Perhaps also somewhat surprisingly at first glance, the upper bound is correct up to a constant factor -- there are several constructions known of no-three-in-line sets $S\subset [n]^2$ satisfying $|S|=\Omega(n)$. The earliest of these, due to Erd\H{o}s~\cite{roth1951problem}, uses a \emph{modular parabola}, that is the set of points
\[
\{(t,t^2\text{ mod } p):t \in [p]\}\subset [p]^2
\]
for some prime $p=(1-o(1))n$. As a quadratic curve in $\mathbb{F}_p^2$ intersects a linear curve in at most $2$ points, it follows that $f(n)\geq (1-o(1))n$. This has been improved by Hall, Jackson, Sudbery and Wild~\cite{hall1975some} to the current best-known $f(n)\geq (1.5-o(1))n$ by patching together several \emph{modular hyperbolae}.

Joshua Erde~\cite[Problem 5]{baber2023collection} raised the following related question: What is the fastest growing function $\fe(n)$ so that there is a no-three-in-line set $S\subset\mathbb{Z}^2$ satisfying
\[
|S\cap[n]^2|=\fe(n)\text{ for all sufficiently large }n?
\]
Crucially, the set $S\subset \mathbb{Z}^2$ is not allowed to depend on $n$, and so this is an \emph{extensible} version of the classical no-three-in-line problem. Note the immediate upper bound
\[
\fe(n)=O(n)
\]
follows from $f(n)=O(n)$. Erde~\cite{baber2023collection} conjectured that one cannot take $\fe(n)=\Omega(n)$. Green~\cite[Problem 72]{Green} reiterated this question and mentioned that Erde's conjecture is plausible ``due to the vague intuition that examples of large no-three-in-a-line subsets of $[-n,n]^2$ may have to come from constructions $\pmod p$ with the value of $p$ being somehow tied to the value of $n$", as we saw in the earlier examples. However, Nagy, Nagy and Woodroofe~\cite{nagy2023extensible} took the antithetical view, conjecturing that a greedy construction achieves $\fe(n)=\Omega(n)$ and provided computational evidence towards this. Further, they proved that one can take
\[
\fe(n)=\Omega\left(\frac{n}{\log^{1+\eps}(n)}\right)\text{ for any }\eps>0.
\]
They did so by taking several separated copies of Erd\H{o}s' construction along the curve $x/\log^{\eps}(x)$, and deleting any collinear triples formed. In this note, we show that one can take $\fe(n)=\Omega(n/\sqrt{\log(n)})$, thus improving the lower bound of Nagy, Nagy and Woodroofe by $\sqrt{\log(n)}$, and leaving the same gap to the upper bound.
\begin{theorem}\
\label{thm}
There exist constants $c>0$ and $n_0\in\N$ and a set $S\subset \Z^2$ with no three points of $S$ on a line satisfying
\[
|S\cap [n]^2|\geq \frac{cn}{\sqrt{\log n}} 
\]
for all $n\geq n_0$.
\end{theorem}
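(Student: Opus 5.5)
We build $S$ by a random construction followed by deletion, working dyadic scale by scale. For $k\ge 1$ let $A_k=[2^k]^2\setminus[2^{k-1}]^2$. For a point $x\in A_k$ we include $x$ in a random set $R$ independently with probability
\[
p_k=\frac{c_0}{2^{k}\sqrt{k}},
\]
where $c_0>0$ is a small constant. Then $\E|R\cap A_k|\asymp c_0\,2^{k}/\sqrt{k}$, and so $\E|R\cap[n]^2|\gtrsim c_0 n/\sqrt{\log n}$. Afterwards we delete points so as to destroy every collinear triple. Since each deletion is charged to a triple, we need the expected number of collinear triples touching each scale to be at most a small multiple of $2^k/\sqrt{k}$. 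We also need concentration so that a single fixed $S$ works for all large $n$ simultaneously.

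\textbf{Counting collinear triples.} Fix a triple of scales $k_1\le k_2\le k_3$. The number of collinear triples $(x,y,z)$ with $x\in A_{k_1}$, $y\in A_{k_2}$, $z\in A_{k_3}$ is at most of order
\[
4^{k_1}\,4^{k_2}\,\frac{2^{k_3}}{2^{k_2}}\cdot\text{(gcd factor)}.
\]
The reason is that for a fixed pair $(x,y)$ at distance about $2^{k_2}$, the lattice points of the line through them in a box of side $2^{k_3}$ number about $2^{k_3}\gcd(y-x)/|y-x|$. Averaging the gcd over pairs costs only a constant factor, since $\sum_{v\in[N]^2}\gcd(v)/|v|\ll N$. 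This must be checked carefully, including the case where $x$ and $y$ are at much smaller distance than $2^{k_2}$, which is only possible when $k_1=k_2$. Multiplying by $p_{k_1}p_{k_2}p_{k_3}$, the expected number of triples is about
\[
c_0^3\,\frac{2^{k_1}}{\sqrt{k_1k_2k_3}}.
\]
This is the heart of the matter. Charge each triple for deletion to its point in the smallest scale $k_1$, and sum over $k_2,k_3\ge k_1$. The sum $\sum_{k_2\le k_3\le K}(k_2k_3)^{-1/2}$ grows like $K$, which is too much. So we instead delete the point of the triple in the \emph{largest} scale $k_3$. The expected deletions from $A_{k_3}$ are then
\[
c_0^3\sum_{k_1\le k_2\le k_3}\frac{2^{k_1}}{\sqrt{k_1k_2k_3}}\ll c_0^3\,\frac{2^{k_3}}{\sqrt{k_3}}\cdot\frac{1}{k_3}\cdot\sum_{k_2\le k_3}1\ll c_0^3\,\frac{2^{k_3}}{\sqrt{k_3}},
\]
using the fact that the geometric sum over $k_1$ is dominated by $k_1\approx k_2$, and then that $\sum_{k_2\le k_3}(k_2k_3)^{-1/2}\cdot k_2^{-1/2}\cdot\ldots$ stays bounded. (Precisely: the $k_1$-sum gives $2^{k_2}/k_2$, and then $\sum_{k_2\le k_3}2^{k_2}/(k_2\sqrt{k_3})\ll 2^{k_3}/k_3^{3/2}$, which is even better.) This balance is exactly why the exponent $1/2$ on $\log$ appears, and I expect verifying this triple count uniformly, including degenerate near-parallel configurations and the gcd averaging, to be the main obstacle. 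The total expected deletion from $A_k$ is then at most $\tfrac12\E|R\cap A_k|$ once $c_0$ is small.

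\textbf{Fixing a single set.} Let $D_k$ be the set of points deleted from $A_k$. By Chernoff, $|R\cap A_k|\ge \tfrac12\E|R\cap A_k|$ except with probability $e^{-\Omega(2^k/\sqrt k)}$, which is summable over $k$. By Markov, $|D_k|\le \tfrac14\E|R\cap A_k|$ with probability bounded away from $1$ but not summable, so a second-moment or Janson/Talagrand-type bound on the number of triples charged to scale $k$ is needed to get failure probability that is summable in $k$. A variance computation over pairs of triples sharing points should suffice, giving $\Var=o((\E)^2)$ polynomially in $2^{-k}$. Borel--Cantelli then yields an outcome in which every large $k$ is good. Setting $S=R\setminus\bigcup_k D_k$, we have that $S$ has no collinear triple. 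For $n\ge n_0$, choose $k$ with $2^{k}\le n<2^{k+1}$. Then $|S\cap[n]^2|\ge|S\cap A_k|\gg 2^k/\sqrt k\gg n/\sqrt{\log n}$, which proves Theorem~\ref{thm}.
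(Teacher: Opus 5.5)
Your overall plan matches the paper's: the same random model with density $c_0/(2^k\sqrt k)$ on dyadic shells, deletion of the largest-norm point of each collinear triple, and a second-moment bound with Borel--Cantelli. However, the step you call the heart of the matter is wrong as stated. The claim $\sum_{v\in[N]^2}\gcd(v)/|v|\ll N$ is false. Write $v=gw$ with $w$ primitive. The sum becomes $\sum_{g\le N}\sum_{w}1/|w|$ over primitive $w\in[N/g]^2$, which is $\asymp\sum_{g\le N}N/g\asymp N\log N$. Equivalently, $[N]^2$ contains $\Theta(N^4\log N)$ collinear triples, not $\Theta(N^4)$.

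So averaging the gcd costs a logarithm. The number of collinear triples in $A_{k_1}\times A_{k_2}\times A_{k_3}$ is of order $4^{k_1}2^{k_2+k_3}k_2$. The expected number is therefore of order $c_0^3\,2^{k_1}\sqrt{k_2/(k_1k_3)}$, not $c_0^3\,2^{k_1}/\sqrt{k_1k_2k_3}$. A sanity check exposes the error. With your count, the same argument with $p_k=c_0/2^k$ would give $O(c_0^3 2^k)$ deletions per scale, hence $\fe(n)=\Omega(n)$, which would settle Erde's question; the paper leaves exactly a $\sqrt{\log n}$ gap. Likewise your ``even better'' bound $2^{k_3}/k_3^{3/2}$ contradicts your own remark that this balance is what forces the exponent $1/2$.

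The argument can be repaired, but with no slack.
\begin{itemize}
\item Bound $\sum_{y\in A_{k_2}}\gcd(y-x)/|y-x|$ by the sum over all nonzero $v\in[-2^{k_2+1},2^{k_2+1}]^2$. This gives a uniform upper bound $\ll 4^{k_1}2^{k_2+k_3}k_2$. It also handles close pairs, which occur at adjacent scales too, not only when $k_1=k_2$.
\item The $k_1$-sum then gives $c_0^3 2^{k_2}/\sqrt{k_3}$.
\item The $k_2$-sum gives expected deletions from $A_{k_3}$ of order $c_0^3\,2^{k_3}/\sqrt{k_3}$, which is exactly $c_0^2$ times $\E|R\cap A_{k_3}|$.
\end{itemize}
That logarithm is precisely the source of the $\sqrt{\log n}$.

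The paper organizes the same computation by lines, using $\E Y_T\le\sum_L W_T(L)^3$. The harmonic sum $\sum_{m\le 2^T}1/m\asymp T$ over directions of norm $m$ is your missing log. It is cancelled because a typical line meeting $B_T$ first enters at a shell near $T$. Its weight is then $\ll c/(m\sqrt T)$ rather than $c\sqrt T/m$, giving $\sum_{L\parallel v}W_T(L)^3\ll c^3 2^T/(m^2T^{3/2})$.

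Finally, your concentration step is only asserted. It does hold, but it is the other half of the work. The paper proves $\Var(Y_T)\ll 2^TT^{7/2}$. The dominant term comes from pairs of triples sharing a single point $x$, and it is controlled by $\beta_T(x)\ll c^2T^2$, which again uses the harmonic sum over directions.
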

\noindent
Our construction is random: we select each point $x\in \mathbb{N}^2$ with probability approximately
\[
\frac{c}{\|x\|_\infty\sqrt{\log\|x\|_\infty}},
\]
for some small constant $c>0$, independently for each $x$, to obtain the set $Q$. We then delete the point with the largest infinity norm from each collinear triple in $Q$ to obtain the set $S$. We show that the set $S$ so obtained satisfies the conclusion of Theorem \ref{thm} with positive probability for some small enough absolute constant $c>0$.

It is plausible that one can improve on Theorem \ref{thm} by a polynomial in $\log\log n$ by using random processes -- see the discussion in ~\cite[Section 1.1]{ghosal2025subsets}. Improving beyond that would likely require different or significantly new ideas.

In a forthcoming paper, the author and Goenka~\cite{GG} use similar methods to solve the extensible no-four-on-a-circle problem up to a constant factor. 

\section*{Acknowledgments}

The author thanks Ritesh Goenka for many helpful conversations and Rob Morris for his comments, which have greatly improved the presentation of this paper. The author is grateful to the Clarendon Fund and Oxford Ryniker Lloyd Graduate Scholarship for supporting his research.

\begin{section}{Proof of Theorem \ref{thm}}
\label{sec:proof}

For $T\in \N\cup\{0\}$, define respectively  the dyadic shell and the box as
\[
R_T\defeq\{x\in \N^2:2^T\le \norm{x}<2^{T+1}\}\quad \text{and} \quad B_T\defeq[2^T]^2.
\]
Fix a constant $c>0$, to be chosen sufficiently small later.
For each $T\geq 0$ and $x\in R_T$, include $x$ independently in a random set $Q\subset \mathbb{Z}^2$ with probability
\[
p(x):=\frac{c}{2^T\sqrt T}.
\]

For $T\geq 0$, define
\[
X_T\defeq |Q\cap R_T|.\]
Note that $\E[X_T]=|R_T|\frac{c}{2^T\sqrt T}\geq \frac{c2^T}{\sqrt T}$. As $X_T$ is a binomial random variable, $\Var(X_T)\leq \E[X_T]$. Therefore, by Chebyshev's inequality, we have that
\begin{align}
\label{eq: xprob}
    \Prob\left(X_T\leq \frac{c\,2^{T-1}}{\sqrt T}\right)\leq \Prob\left(|X_T-\E(X_T)|\geq \frac{1}{2}\E(X_T)\right)\leq\frac{4}{\E(X_T)}\leq \frac{4\sqrt{T}}{c2^T}.
\end{align}

Let $Y_T$ denote the number of unordered collinear triples contained in $Q\cap B_T$. We claim that the following bounds hold on the expectation and variance of $Y_T$.

\begin{lemma}
\label{claim2}
    There exists an absolute constant $K_1>0$ so that
    \[
    \E[Y_T]\leq K_1\frac{c^3\,2^T}{\sqrt T}
    \]
    holds for every $T\in \mathbb{N}\cup \{0\}$.
\end{lemma}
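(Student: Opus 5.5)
The plan is a first-moment computation: write $\E[Y_T]$ as a sum over collinear triples in $B_T$ of the product of the three inclusion probabilities, organize it by the dyadic shells containing the three points, and bound the number of collinear triples with prescribed shells by a lattice-point count along a line.

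\emph{Reduction to shell counts.} Every point of $B_T=[2^T]^2$ lies in some $R_t$ with $t\le T$. For a collinear triple $\{x,y,z\}$, order it so that $x\in R_a$, $y\in R_b$, $z\in R_s$ with $a\le b\le s\le T$. Writing $N(a,b,s)$ for the number of such ordered choices, we get
\[
\E[Y_T]\le \sum_{a\le b\le s\le T} N(a,b,s)\,\frac{c^3}{2^{a+b+s}\sqrt{abs}}.
\]
The $t=0$ shell needs a harmless convention, since $p(x)$ is written with $\sqrt T$ and would blow up at $T=0$; I would cap $p(x)\le 1$ there, and it only affects constants. So it suffices to prove the shell count
\[
N(a,b,s)\le K\,4^a\,b\,2^{b}\,2^{s}.
\]

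\emph{Counting triples.} I would fix $x$, which has at most $4^{a+1}$ choices, and $y$. Let $v=y-x$, $g=\gcd(v)$, and let $u=v/g$ be the primitive direction. The lattice points on the line $xy$ are spaced by $u$, so the number of $z$ on this line with $\norm{z}<2^{s+1}$ is at most $2^{s+2}/\norm{u}+1$.

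The "$+1$" term can be absorbed. If some valid $z\ne x,y$ exists, then $\norm{u}\le\norm{z-x}\le 2^{s+2}$, so the count is at most $2^{s+3}/\norm{u}=2^{s+3}g/\norm{v}$.

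It then remains to bound $\sum_{y} \gcd(v)/\norm{v}$ over $y\in R_b$ with $v=y-x\ne 0$. Here $v$ ranges over a box of side $O(2^b)$. Grouping by $\norm{v}=m$ and using $\sum_{\norm{v}=m}\gcd(v)\ll \sum_{k\le m}\gcd(k,m)\ll m\,d(m)$, together with $\sum_{m\le M} d(m)\ll M\log M$, gives
\[
\sum_{y} \frac{\gcd(v)}{\norm{v}}\ll \sum_{m\ll 2^b} d(m)\ll b\,2^b.
\]
This bound is uniform in $x$, so the cases $b\le a+1$ (where $\norm{v}$ may be small) need no separate treatment. It also covers $a=b$ or $b=s$, where the triple has ties. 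Multiplying out yields $N(a,b,s)\ll 4^a\,b\,2^b\,2^s$.

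\emph{Summation.} Substituting, each term becomes
\[
\ll c^3\,\frac{2^a\sqrt b}{\sqrt{a s}}\le c^3\,\frac{2^a}{\sqrt a}.
\]
Put $a=T-j$. There are at most $(j+1)^2$ pairs $(b,s)$ with $a\le b\le s\le T$, so
\[
\E[Y_T]\ll c^3 \sum_{j\ge 0} (j+1)^2\,\frac{2^{T-j}}{\sqrt{\max(T-j,1)}}.
\]
For $j\le T/2$ we have $\sqrt{T-j}\asymp\sqrt T$, and $\sum_j (j+1)^2 2^{-j}$ converges. For $j>T/2$ the terms are at most $T^2 2^{T/2}\ll 2^T/\sqrt T$. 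Hence $\E[Y_T]\le K_1 c^3 2^T/\sqrt T$, as claimed.

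The main obstacle is the counting step. A naive choice of which two points to fix, or a failure to handle the "$+1$" term and the average-gcd factor, loses a factor of $T$ or worse. The argument only works because the average-gcd $\log$ factor attaches to the middle shell $b$: the resulting $\sqrt b$ is cancelled by $\sqrt s$ with $s\ge b$.
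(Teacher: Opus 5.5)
Your proof is correct, and it takes a genuinely different route from the paper.

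\textbf{The paper's route.} It works line by line. It bounds the expected number of triples on a lattice line $L$ by $W_T(L)^3$, where $W_T(L)=\sum_{x\in L\cap B_T}p(x)$. It estimates $W_T(L)\ll \frac{c}{m}(\sqrt T-\sqrt{s(L)-1})$ via $|L\cap R_t|\ll 2^t/m$, counts $O(2^tm)$ lines of a fixed direction meeting $R_t$, and sums over the $O(m)$ primitive directions of norm $m$. Its logarithmic loss is the harmonic sum $\sum_{m\le 2^T}1/m$, paid for by the factor $T^{-3/2}$ coming from $(\sqrt T-\sqrt{t-1})^3$.

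\textbf{Your route.} You sort each triple by shells, fix the two lower points and count the top one. Your logarithm is therefore an average of $\gcd(v)$ (a divisor sum) attached to the middle shell, where $b\le s$ absorbs it.

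\textbf{What each buys.} The paper's version is shorter, and its per-line weights are reused for $V_2$ and $V_1$ in Lemma~\ref{claim3}. Yours is sound at exactly the delicate point you flagged: the point counted last lies in the top shell, so $\norm{u}\le 2^{s+2}$ and the $+1$ in the lattice-point count costs nothing. The paper's estimate $|L\cap R_t|\ll 2^t/m$ drops that $+1$ and is false when $m\gg 2^t$ and $L$ meets $R_t$.

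Moreover, $W_T(L)^3$ retains diagonal terms. There are $\asymp 4^T$ lines through $(2,2)$ containing three points of $B_T$, and each has $W_T(L)\ge p((2,2))=c/2$. Hence $\sum_L W_T(L)^3\gg c^3 4^T$, far above the claimed $c^3 2^T/\sqrt T$. The line-based argument needs a repair, for instance using products over distinct points and treating separately the at most one point of $L$ of norm less than $m$. Your counting never requires this.

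\textbf{One small correction.} On $R_0=\{(1,1)\}$, put $p=c$ (read $\sqrt T$ as $\sqrt{\max(T,1)}$) rather than capping at $1$. With $p((1,1))=1$, the triples through $(1,1)$ carry only a factor $c^2$. Then $K_1$ could not be taken independent of $c$ for small $T$, contrary to your remark that the cap only affects constants. Independence matters because $c$ is later chosen as $1/\sqrt{16K_1}$.
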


\begin{lemma}
\label{claim3}
    There exists an absolute constant $K_2>0$ so that 
    \[
    \Var(Y_T)\leq K_2 2^TT^{7/2}
    \]
    holds for every $T\in \mathbb{N}\cup \{0\}$.
\end{lemma}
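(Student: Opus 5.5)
We prove Lemma~\ref{claim3}, the variance bound. Write $Y_T=\sum_{\tau} I_\tau$, where $\tau$ ranges over unordered collinear triples in $B_T$ and $I_\tau$ is the indicator that $\tau\subset Q$. By independence of the point inclusions, $I_\tau$ and $I_{\tau'}$ are independent unless $\tau\cap\tau'\neq\emptyset$. Hence
\[
\Var(Y_T)\le \sum_{|\tau\cap\tau'|\ge 1}\E[I_\tau I_{\tau'}]=\E[Y_T]+\sum_{|\tau\cap\tau'|=2}\Prob(\tau\cup\tau'\subset Q)+\sum_{|\tau\cap\tau'|=1}\Prob(\tau\cup\tau'\subset Q).
\]
The diagonal term is at most $K_1c^3 2^T/\sqrt T$ by Lemma~\ref{claim2}. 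If $|\tau\cap\tau'|=2$, then $\tau\cup\tau'$ is a collinear $4$-set. If $|\tau\cap\tau'|=1$, it is a $5$-set made of two lines through a common point. The plan is to bound each sum by $O(2^T T^{7/2})$. The bound is lossy in the powers of $T$ and $c$; we only need to be careful with the power of $2^T$.

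\emph{Step 1: crude probability bounds.} For every $x\in B_T$ we have $p(x)\le c/\|x\|_\infty$ up to constants, with no $\sqrt{\cdot}$ gain when $\|x\|_\infty$ is small. It is convenient to organise the sums by dyadic shells $R_{s}$, $s\le T$. In $R_s$ there are about $4^s$ points, each with probability about $c2^{-s}s^{-1/2}$.

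\emph{Step 2: the one-overlap sum.} Fix the common point $z$. The sum factorises as
\[
\sum_z p(z)\Bigl(\sum_{\{a,b\}:\ z,a,b\ \text{collinear}} p(a)p(b)\Bigr)^2 .
\]
The key quantity is $A(z):=\sum_{a,b} p(a)p(b)$ over pairs $a,b\in B_T$ collinear with $z$. To estimate it, sum over $a$ and then over $b$ on the line $za$. The points of $B_T$ on the line $za$ are spaced by the primitive vector $v=(a-z)/\gcd$, so there are at most $2^{T+1}/\|v\|_\infty$ of them. Summing $p(b)$ along the line therefore gives at most about $(\log\text{-many shells})\cdot\frac{1}{\|v\|_\infty}$ times a constant, roughly $O(T/\|v\|)$ crudely.

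Next sum over $a$: a point $a$ with primitive direction $v$ and $\|a-z\|\approx 2^j$ has weight $p(a)\lesssim 2^{-s(a)}$. Counting pairs (direction $v$ with $\|v\|\approx 2^i$, multiple $k$) gives a sum of the type $\sum_{i}\sum_{v}\sum_k 2^{-i}\,p(z+kv)$. Each point $a$ is counted once, so this equals $\sum_a p(a)/\|v(a)\|$. Since $\sum_a p(a)\lesssim \sum_s 2^s/\sqrt s\lesssim 2^T$ and $1/\|v\|\le1$, this is crude. A better bound uses the count of primitive $v$ at scale $2^i$, about $4^i$, together with the at most $2^{T-i}$ multiples, which gives $A(z)\lesssim T^{O(1)}$. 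Then $\sum_z p(z)A(z)^2\lesssim 2^T T^{O(1)}$. I will track exponents to land at $T^{7/2}$, each $\sum_s$ over shells costing a factor $T$.

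\emph{Step 3: the two-overlap sum.} This is $\sum_{\text{collinear }4\text{-sets}}\prod p$. Fix the line direction and position. Mimicking Step 2, it is bounded by $\sum_{z}p(z)\sum_{a}p(a)\bigl(\sum_{b\in za}p(b)\bigr)^2$, which is at most $\max_a(\text{line sum})\cdot$ the Step 2 quantity, and so is of the same order.

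\emph{Main obstacle.} The main difficulty is the uniform estimate on $A(z)$, and specifically avoiding a loss of $2^T$. The weights near the origin, namely $p\approx c$ for small $\|x\|$, are large. Many lines pass through points near the origin, because directions $v$ with small norm have long lines and large line sums. The plan is to handle this by splitting according to the dyadic scale of $\|v\|_\infty$ and of $\|z\|_\infty$, and using that the number of primitive directions of norm $\approx 2^i$ is $O(4^i)$ while each such line carries weight $O(T2^{-i})$. Then each scale contributes $O(T^{O(1)})$, and summing over at most $T$ scales gives only polylogarithmic losses.
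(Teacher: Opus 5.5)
Your skeleton is the paper's. You use the same split by $|\tau\cap\tau'|$ and Lemma~\ref{claim2} for the diagonal. For one shared point you use the same bound $V_1\le\sum_z p(z)A(z)^2$, where $A(z)$ is the paper's $\beta_T(z)$. It is controlled through $\sum_{L\ni z}W_{T,z}(L)^2$, the count of $O(m)$ primitive directions of norm $m$, and a line weight of order $1/m$ up to powers of $T$.

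Where you genuinely differ is the two-overlap term. The paper bounds $V_2$ line by line by $\sum_L W_T(L)^4$. You instead fix $z$, then $a$, and square the remaining line sum. Done cleanly, with $b,b'\notin\{z,a\}$, this gives
\[
V_2\ll\sum_z p(z)\sum_{L\ni z}W_{T,z}(L)^3\le\max_L W_T(L)\cdot\sum_z p(z)\sum_{L\ni z}W_{T,z}(L)^2\ll c^4T^2\,2^T .
\]
This is a worse power of $T$ than the paper claims, but that is irrelevant since $V_1$ dominates. As explained below, it is also the more robust route.

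Two repairs are needed.

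\emph{The exponent.} With the bounds you actually state (line sums $O(T/\norm{v})$ and $\sum_z p(z)\lesssim 2^T$), you get $A(z)\ll T^3$ and $V_1\ll 2^T T^6$. That is enough for Theorem~\ref{thm}, but not for the lemma as stated. You need $\sum_{t\le T}t^{-1/2}\ll\sqrt T$, which gives line weight $\ll c\sqrt T/m$ and hence $A(z)\ll\sum_m m\,(c\sqrt T/m)^2\ll c^2T^2$. Combine this with $\sum_z p(z)=\sum_{t\le T}\E[X_t]\ll c\,2^T/\sqrt T$ to get $V_1\ll c^5T^{7/2}2^T$.

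\emph{The line-weight claim.} Your last paragraph says every line whose direction has norm about $2^i$ carries weight $O(T2^{-i})$. This is false: the line through $(2,2)$ with direction $(2^i,2^i-1)$ has weight at least $p((2,2))=c/2$. The correct statement is as follows. A line $L$ with direction $v$, $\norm{v}=m$, has at most one point $y_L$ in $[1,m-1]^2$, since any two of its points differ by a nonzero multiple of $v$. Its other points lie in shells $R_t$ with $2^{t+1}>m$, where $|L\cap R_t|\le 1+2^{t+1}/m$. Hence $W_T(L)\ll c\sqrt T/m+p(y_L)$.

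In your point-based sums this costs nothing. Distinct lines through $z$ have distinct exceptional points, so $A(z)$ grows by at most $2\sum_y p(y)^2\ll c^2\log T$, and $\max_L W_T(L)$ stays $O(c\sqrt T)$.

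The paper's line-weight estimate has the same omission, and for its $V_2$ step it is not harmless. The $\Theta(4^T)$ lines through $(2,2)$ with at least four points in $B_T$ each have $W_T(L)\ge c/2$, so $\sum_L W_T(L)^4\gg c^4 4^T$. Your point-based Step~3 avoids this.
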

\noindent We defer the proofs of the lemmas for now and see how to deduce Theorem \ref{thm} from them first.

\begin{proof}[Proof of Theorem \ref{thm}]
From Lemmas \ref{claim2} and \ref{claim3}, using Chebyshev's inequality, we get that
\begin{align}
\label{eq: yprob}
    \Prob\left(Y_T\geq 2K_1\frac{c^3\,2^T}{\sqrt T}\right)\leq \frac{T}{K_1^2c^62^{2T}}\cdot\Var(Y_T)\leq \frac{K_2}{K_1^2c^6}\cdot\frac{T^{9/2}}{2^{T}}.
\end{align}

We now choose $c\defeq 1/\sqrt{16K_1}$. For $T\geq 0$, define the event
\[
E_T=\left\{X_T\geq \frac{c\,2^{T-1}}{\sqrt T}\right\}\cap\left\{Y_T\leq 2K_1\frac{c^3\,2^T}{\sqrt T}\right\}.
\]
It follows from \eqref{eq: xprob} and \eqref{eq: yprob} that $\sum_{T\geq 0}\Prob(E_T^c)<\infty$. In particular, there exists $T_0\geq 0$ so that $\sum_{T\geq T_0}\Prob(E_T^c)<1$ and so, with positive probability, $E_T$ holds for all $T\geq T_0$. We pick a realisation $Q\subset\mathbb{Z}^2$ satisfying this property.

From this set $Q\subset \mathbb{Z}^2$, construct the set $S\subset Q$ as follows:
for each collinear triple in $Q$, delete from that triple the point of largest $\norm{\cdot}$, breaking any ties arbitrarily. By its construction, $S$ has no three points on a line. Deleting the point with the largest infinity norm ensures that $|S\cap R_T|\geq X_T-Y_{T+1}$. Therefore, for $n> 2^{T_0+2}$, letting $T\defeq \lfloor\log_2(n)\rfloor-1$, we have that
\[
|S\cap [n]^2|\geq |S\cap R_{T}|\geq X_T-Y_{T+1}\geq \frac{c}{4} \frac{2^T}{\sqrt{T}}\geq \frac{c}{16}\cdot\frac{n}{\sqrt{\log(n)}},
\]
as we wished.
\end{proof}

Before we can proceed with the proofs of the lemmas, we need to set up some notation. A \emph{lattice line} is a line in $\mathbb{R}^2$ intersected with the lattice $\mathbb{Z}^2$, provided the intersection contains at least two points. A nonzero vector $v=(a,b)\in\Z^2$ is called \emph{primitive} if $\gcd(a,b)=1$. Each lattice line is parallel to a primitive vector, unique up to sign: we fix one representative for each unoriented primitive direction, and call this vector the \emph{direction} of the line. For a line $L$ and a vector $v$, we write $L\parallel v$ to mean that the direction of $L$ is $v$. Finally, for functions $f,g:\mathbb{N}\to \mathbb{R}$, we write $f\ll g$ to mean $f=O(g)$, that is, there is a constant $C>0$ so that $|f(n)|\leq C|g(n)|$ for all $n\in \mathbb{N}$. We are now ready to prove Lemmas \ref{claim2} and \ref{claim3}.

\begin{proof}[Proof of Lemma \ref{claim2}]
    For a lattice line $L$, define its weight $W_T(L):=\sum_{x\in L\cap B_T} p(x)$. The expected number of triples on a line $L\cap Q$ is at most
    \[
    \sum_{x,y,z\in L\cap B_T} p(x)p(y)p(z)\le W_T(L)^3.
    \]
    It follows that
    \begin{equation}
    \label{eq: expectation is sum weight cubed}
        \E Y_T\le \sum_L W_T(L)^3
    \end{equation}
    where the sum runs over all lattice lines meeting $B_T$. Let $L$ be a lattice line with primitive direction $v$, let $m=\norm{v}$, and let $s(L):=\min\{t\ge 0:L\cap R_t\neq \emptyset\}$. Note that as
    \[
    |L\cap R_t|\ll\frac{2^t}{m}
    \]
    for $t\geq 0$, it follows that
    \begin{equation}
    \label{eq:weight on line bd}
        W_T(L)
    =\sum_{t=s(L)}^T |L\cap R_t|\frac{c}{2^t\sqrt t}
    \ll\frac{c}{m}\sum_{t=s(L)}^T \frac1{\sqrt t}\ll\frac{c}{m}(\sqrt{T}-\sqrt{s(L)-1}),
    \end{equation}
    where the last inequality follows from comparison with the corresponding integral.
    Now, we will bound the contribution to $\E[Y_T]$ from lines of direction $v=(a,b)$. Such a lattice line has equation $bx-ay=k$ for some $k\in\Z$. If $(x,y)\in R_t$, then
    \[
    |k|\le |b|2^{t+1}+|a|2^{t+1}\le 4m2^t,
    \]
    so the number of such lines meeting $R_t$ is $O(2^tm)$. It follows from \eqref{eq:weight on line bd} that
    \begin{align}
    \label{eq: total weight cubed on parallel lines}
        \sum_{L\parallel v} W_T(L)^3
    \ll \sum_{t=1}^T (2^t m)\left(\frac{c}{m}(\sqrt T-\sqrt{t-1})\right)^3
    \ll \frac{c^3}{m^2}\sum_{t=1}^T 2^t\left( \frac{T-{t+1}}{\sqrt{T}}\right)^3\ll\frac{c^32^T}{m^2 T^{3/2}},
    \end{align}
    where we used in the final step that $\sum_{r\geq 0}2^{-r}(r+1)^3$ converges.
    The number of primitive directions $v$ with $\norm{v}=m$ is $O(m)$, so summing over $m$ gives
    \[
    \sum_L W_T(L)^3
    \ll \frac{c^3\,2^T}{T^{3/2}}\sum_{m=1}^{2^T}\frac1m
    \ll \frac{c^3\,2^T}{\sqrt T}.
    \]
    The claim now follows from \eqref{eq: expectation is sum weight cubed}.
\end{proof}

\begin{proof}[Proof of Lemma \ref{claim3}]
    Note that $Y_T=\sum_{\tau} I_\tau$, where $\tau$ ranges over all unordered collinear triples in $B_T$, and $I_\tau$ is the indicator that all three points of $\tau$ are selected. It follows that
    \begin{align}
    \label{eq: second moment}
        \Var(Y_T)=\sum_{\tau,\tau'} \operatorname{Cov}(I_\tau,I_{\tau'})=\sum_{\tau \cap \tau'\neq \emptyset} \operatorname{Cov}(I_\tau,I_{\tau'})\leq \sum_{\tau \cap \tau'\neq \emptyset} \E(I_\tau I_{\tau'}).
    \end{align}
    For $1\leq i\leq 3$, let
    \[
    V_i\defeq \sum_{|\tau \cap \tau'|=i} \E(I_\tau I_{\tau'})
    \]
    First, note that
    \begin{equation}
\label{eq: V_3 bd}
    V_3\leq \E(Y_T)\ll2^T/\sqrt{T}.
    \end{equation}

    Next, we bound the total contribution from the terms satisfying $|\tau\cap \tau'|=2$.
    Such pairs lie on a common lattice line and together involve exactly four distinct points. For a fixed line $L$, the total contribution from all such ordered pairs is
    at most $W_T(L)^4$, where $W_T(L):=\sum_{x\in L\cap B_T} p(x)$ is defined as in the proof of Lemma \ref{claim2}. Suppose $v$ is a primitive vector satisfying $\norm{v}=m$. Similarly as in the derivation for \eqref{eq: total weight cubed on parallel lines}, we obtain that 
    \begin{align}
    \label{eq: total weight fourth on parallel lines}
        \sum_{L\parallel v} W_T(L)^4
    \ll \sum_{t=1}^T (2^t m)\left(\frac{c}{m}(\sqrt T-\sqrt{t-1})\right)^4
    \ll \frac{c^4}{m^3}\sum_{t=1}^T 2^t\left( \frac{T-{t+1}}{\sqrt{T}}\right)^4\ll\frac{c^42^T}{m^3 T^{2}},
    \end{align}
    Summing over primitive directions with $\norm{v}=m$, of which there are $O(m)$, yields
    \begin{equation}
\label{eq: V_2 bd}
    V_2\leq \sum_L W_T(L)^4
    \ll \frac{c^4\,2^T}{T^2}\sum_{m=1}^{2^T}\frac1{m^2}
    \ll \frac{\,2^T}{T^2}.
    \end{equation}

    Finally, we need to bound the total contribution of the terms satisfying $|\tau\cap \tau'|=1$. Note that the total contribution from ordered pairs of triples sharing exactly the point $x\in B_T$ is at most $p(x)\,\beta_T(x)^2$, where \[
\beta_T(x):=
\sum_{\{y,z\}} p(y)p(z),
\]
and the sum runs over unordered pairs $\{y,z\}$ of distinct points in $B_T\setminus\{x\}$ such that $x,y,z$ are collinear. It follows that
\begin{equation}
\label{eq: V_1 bdd by betas}
    V_1\le \sum_{x\in B_T} p(x)\,\beta_T(x)^2.
\end{equation}
For a line $L$ containing $x$, define $W_{T,x}(L):=\sum_{y\in L\cap B_T\setminus\{x\}} p(y)$.
Then
\begin{equation}
\label{eq: beta bounded by ws}
    \beta_T(x)\le \sum_{L\ni x} W_{T,x}(L)^2.
\end{equation}
If $L$ has primitive direction $v$ with $m=\norm{v}$, then similarly as in the proof of Lemma \ref{claim2}, we have that
\[
W_{T,x}(L)\ll \sum_{t=1}^T |L\cap R_t|\frac{c}{2^t\sqrt t}
\ll \frac{c}{m}\sum_{t=1}^T \frac1{\sqrt t}
\ll \frac{c\sqrt T}{m}.
\]
The number of primitive directions with $\norm{v}=m$ is $O(m)$, and for each such direction there is exactly one line through $x$.
Hence, using \eqref{eq: beta bounded by ws}, we have that
\[
\beta_T(x)\leq \sum_{L\ni x} W_{T,x}(L)^2
\ll \sum_{m=1}^{2^T} m\left(\frac{c\sqrt T}{m}\right)^2
= c^2 T \sum_{m=1}^{2^T}\frac1m
\ll c^2 T^2.
\]
Now, \eqref{eq: V_1 bdd by betas} yields
\begin{equation}
\label{eq: V_1 bd}
    V_1\ll c^4T^4\sum_{x\in B_T}p(x)=c^4T^4\sum_{t\leq T}\E[X_t]\ll T^{7/2}2^T,
\end{equation}
where in the last line, we used that $\E[X_t]=\Theta(2^t/\sqrt{t})$. Combining \eqref{eq: second moment}, \eqref{eq: V_3 bd}, \eqref{eq: V_2 bd}, and \eqref{eq: V_1 bd}, we have that
\[
\Var(Y_T)\leq V_3+V_2+V_1\ll 2^TT^{7/2},
\]
as desired.
\end{proof}

\end{section}

\bibliographystyle{amsplain}
\bibliography{references}

\end{document}